\documentclass[12pt,a4paper]{amsart}
\usepackage[margin=30mm]{geometry}
\usepackage{amssymb,amsthm,amsmath, amsfonts,ascmac, enumerate}
\numberwithin{equation}{section}
\usepackage[colorlinks=true,linkcolor=blue,citecolor=blue]{hyperref}

\theoremstyle{definition}
\newtheorem{thm}{Theorem}[section]

\newtheorem{lem}[thm]{Lemma}
\newtheorem{prop}[thm]{Proposition}
\newtheorem{rem}[thm]{Remark}
\newtheorem{cor}[thm]{Corollary}
\newtheorem{ex}[thm]{Example}

\newcommand{\R}{\mathbb{R}}   
\newcommand{\C}{\mathbb{C}}   
\newcommand{\N}{\mathbb{N}}

\newcommand{\calP}{{\mathcal{P}}}

\newcommand{\bbB}{\mathbb{B}}

\newcommand{\rmd}{\mathrm{d}}
\newcommand{\rmi}{\mathrm{i}}

\renewcommand{\Re}{{\sf Re}}
\renewcommand{\Im}{{\sf Im}}

\title{Scaling limit theorem for mixed free and Boolean convolution powers}

\author{Hao-Wei Huang, Noriyoshi Sakuma, Pei-Lun Tseng and Yuki Ueda}

\address{
Hao-Wei Huang:
Department of Mathematics, National Tsing Hua University, 101, Section 2, Kuang-Fu Road, Hsinchu 300, Taiwan}
\email{huanghw@math.nthu.edu.tw}

\address{
Noriyoshi Sakuma:
Department of Mathematics, Graduate School of Science, Osaka University, 1-1 Machikaneyama, Toyonaka 560-0043, Osaka, Japan}
\email{sakuma@math.sci.osaka-u.ac.jp}

\address{
Pei-Lun Tseng:
Department of Applied Mathematics, Chung Yuan Christian University, No. 200, Zhongbei Rd., Zhongli Dist., Taoyuan City 320314, Taiwan}
\email{PLTseng@cycu.edu.tw}

\address{
Yuki Ueda:
Department of Mathematics, Hokkaido University of Education, Hokkaido, 9 Hokumon-cho, Asahikawa, Hokkaido 070-8621, Japan}
\email{ueda.yuki@a.hokkyodai.ac.jp}

\date{\today}

\begin{document}

\begin{abstract}
We prove a scaling limit theorem for a double sequence of probability measures involving additive free convolution $\boxplus$ and additive Boolean convolution $\uplus$.
Let $\mu$ be a probability measure on $\R$ with mean zero and variance one, and let $M=M(N)>0$ satisfy $MN^{\alpha+1/2}\to t>0$. We study the weak limits, as $N\to \infty$, of the double arrays
$D_{N^\alpha}((\mu^{\boxplus N})^{\uplus M})$.
We show that the limit distribution is the Cauchy distribution with scale parameter $t$ if $\alpha>-1/2$, the $t$-fold Boolean convolution power of the standard semicircle law if $\alpha=-1/2$, and the point mass at the origin if $\alpha<-1/2$. 
\end{abstract}

\maketitle

\section{Introduction}

Free probability theory, initiated by Voiculescu in the 1980s, provides an analytic and combinatorial framework for non-commutative random variables. 
Its basic convolution operations, such as the additive free convolution $\boxplus$ and the multiplicative free convolution $\boxtimes$, play a central role in limit theorems, random matrix theory, and operator algebras. 
These operations serve as non-commutative analogues of classical convolutions and play a key role in describing limit distributions in random matrix models and operator algebras.

In addition to free independence, other notions of independence, such as Boolean and monotone independence, have been introduced and studied, leading to a rich interplay among different convolution structures; see \cite{Speicher} for the Boolean setting and \cite{muraki2000monotonic,muraki2001monotonic} for the monotone setting. One of the most prominent examples of such an interaction is the Belinschi--Nica semigroup \cite{BelinschiNica2008}, which connects additive free and Boolean convolutions through a semigroup of homomorphisms on the space of probability measures. This construction shows that the combination of different notions of independence can produce natural and nontrivial transforms of probability measures.

From a classical probabilistic perspective, double-array limit theorems provide a framework for studying situations in which two or more parameters vary simultaneously and the limiting behavior depends on their relative growth rates. Such problems arise naturally in a variety of settings and often exhibit phase transitions governed by the competition between different scaling parameters.

This viewpoint naturally suggests a corresponding question in non-commutative probability: what kinds of limiting phenomena arise when two distinct convolution structures interact and their convolution parameters are scaled simultaneously? In recent years, several results in this direction have appeared. In particular, Sakuma and Yoshida \cite{SakumaYoshida2013} established limit theorems involving combinations of additive and multiplicative free convolutions, as well as additive Boolean and multiplicative free convolutions. Their work demonstrated that mixed convolution structures can produce limiting distributions that are not visible within a single notion of independence. More broadly, there has been growing interest in hybrid and interpolating forms of independence; see, for example, \cite{Mlotkowski2004},
\cite{Speicher2016},
\cite{Charlesworth_2021}, and
\cite{arizmendi2023bmtindependence}.

Motivated by these developments, we investigate a double-array scaling problem involving additive free and additive Boolean convolutions.
More precisely, starting from a probability measure $\mu$, we first form a large free convolution power $\mu^{\boxplus N}$, and then apply a Boolean convolution power of order $M$. This leads to the normalized double sequence
$$
D_{N^\alpha}\left((\mu^{\boxplus N})^{\uplus M}\right)
$$
where the parameters $N$ and $M$ satisfy certain scaling relations. Here $D_c$ denotes the dilation of probability measures induced by the map $x\mapsto cx$ $(c\neq 0)$; see \eqref{eq:dilation} for the precise definition.
This problem may be regarded as an asymptotic counterpart of the free--Boolean interaction appearing in the Belinschi--Nica semigroup. 
In fact, the Belinschi--Nica semigroup can be embedded into our
double-array framework. Indeed, choosing $N=1+t$ and $M=1/(1+t)$
yields
\[
\mathbb{B}_t(\mu)
=
(\mu^{\boxplus(1+t)})^{\uplus\frac1{1+t}}.
\]
Consequently, the large-time asymptotic behavior of the
Belinschi--Nica semigroup can be recovered from our general limit theorem; see Corollary~\ref{cor:lim B-N semigroup}.

A key feature of this double-scaling regime is that the two parameters $N$ and $M$ compete with each other. 
The parameter $N$ governs the free evolution, while $M$ governs
the Boolean evolution. The normalization $D_{N^\alpha}$ balances
these two effects. Consequently, the limiting law is determined not only by the individual
convolution structures, but also by the relative growth of the two time
scales. As we shall see, this competition leads to a trichotomy
phenomenon and produces three different universal limits.

\begin{thm}\label{thm1}
Let $\mu$ be a probability measure on $\mathbb{R}$ with mean zero and variance one.
Suppose $N\ge 1$ tends to infinity through positive real values, and let
$M=M(N)>0$ be a positive real-valued function such that $MN^{\alpha+\frac12}\to t$
for some $t>0$.
Then
\begin{enumerate}[\rm (1)]
    \item 
    if $\alpha > -\frac{1}{2}$, then $D_{N^\alpha}\left( (\mu^{\boxplus N})^{\uplus M} \right) \xrightarrow{w} \mu_{C_t}$;
    \item 
    if $\alpha = -\frac{1}{2}$, then $D_{N^\alpha}\left( (\mu^{\boxplus N})^{\uplus M} \right) \xrightarrow{w} \mu_{\rm{sc}}^{\uplus t}$;
    \item 
    if $\alpha < -\frac{1}{2}$, then $D_{N^\alpha}\left( (\mu^{\boxplus N})^{\uplus M} \right)\xrightarrow{w}\delta_0$,
\end{enumerate}
where $\mu_{C_t}$ is the Cauchy distribution with parameter $t>0$ and $\mu_{\rm sc}$ is the standard Wigner semicircle law; see Example \ref{ex:distributions}.
\end{thm}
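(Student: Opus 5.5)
We will work with reciprocal Cauchy transforms and the Boolean self-energy. Write $F_\nu(z)=1/G_\nu(z)$ and $E_\nu(z)=z-F_\nu(z)$. Two standard facts drive everything. Boolean powers act linearly on $E$: $E_{\nu^{\uplus M}}=M E_\nu$. Dilation acts by $E_{D_c\nu}(z)=cE_\nu(z/c)$. Combining them gives
\[
E_{D_{N^\alpha}((\mu^{\boxplus N})^{\uplus M})}(z)=M N^{\alpha}\,E_{\mu^{\boxplus N}}(z/N^\alpha).
\]
It is natural to rescale the free power by $\sqrt N$ and set $\mu_N:=D_{N^{-1/2}}(\mu^{\boxplus N})$. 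Then $E_{\mu^{\boxplus N}}(w)=\sqrt N\,E_{\mu_N}(w/\sqrt N)$, and the expression above becomes
\[
E_N(z):=MN^{\alpha+1/2}\,E_{\mu_N}\bigl(zN^{-\alpha-1/2}\bigr).
\]
By the free central limit theorem, $\mu_N\to\mu_{\rm sc}$ weakly. Weak convergence of probability measures is equivalent to locally uniform convergence of $F$ (equivalently of $E$) on $\C^+$, together with the requirement that the limit be the $F$-transform of a probability measure. So it suffices to identify the pointwise limit of $E_N(z)$ for fixed $z\in\C^+$ and check that it is the $E$-transform of the claimed law. Throughout, set $s_N=N^{-\alpha-1/2}$ and note that $MN^{\alpha+1/2}\to t$.

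\textbf{Case (2), $\alpha=-1/2$.} Here $s_N=1$, so $E_N(z)=MN^{0}E_{\mu_N}(z)\to tE_{\mu_{\rm sc}}(z)$ locally uniformly, because $E_{\mu_N}\to E_{\mu_{\rm sc}}$ locally uniformly on $\C^+$. The limit $tE_{\mu_{\rm sc}}$ is the $E$-transform of $\mu_{\rm sc}^{\uplus t}$, which gives (2).

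\textbf{Case (3), $\alpha<-1/2$.} Now $s_N\to\infty$, so $zs_N$ goes to infinity nontangentially. Mean zero and variance one for $\mu_N$ give $E_{\mu_N}(w)=1/w+o(1/|w|)$, so
\[
E_N(z)\approx t\,\frac{1}{zs_N}\to 0,
\]
and the limit law is $\delta_0$. The point needing care is uniformity in $N$ of the expansion. For this we use the Nevanlinna representation $E_{\mu_N}(w)=\int \frac{\rho_N(\rmd x)}{w-x}$ with $\rho_N$ a positive measure of total mass $\mathrm{Var}(\mu_N)=1$. Mean zero makes the real constant vanish. Then $|E_{\mu_N}(w)|\le 1/\Im w$, which yields $|E_N(z)|\le MN^{\alpha+1/2}/(s_N\Im z)\to0$ directly.

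\textbf{Case (1), $\alpha>-1/2$.} Now $s_N\to 0$ and $E_N(z)=MN^{\alpha+1/2}E_{\mu_N}(zs_N)$. We need $E_{\mu_N}(w)\to -\rmi$ as $w\to 0$ within a cone in $\C^+$, uniformly in $N$. For the semicircle law, $E_{\mu_{\rm sc}}(0+\rmi0)=G_{\mu_{\rm sc}}(\rmi0^+)=-\rmi$, since $E_{\rm sc}=G_{\rm sc}$ and the density at $0$ is $1/\pi$. Granting this, $E_N(z)\to -\rmi t$, which is the $E$-transform of $\mu_{C_t}$ because $F_{\mu_{C_t}}(z)=z+\rmi t$.

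The main obstacle is this uniform local statement at the origin: $E_{\mu_N}(zs_N)\to -\rmi$ for any $s_N\to0$ and any rate, including rates much faster than $N^{-1/2}$. Weak convergence alone does not control the behaviour at scale $s_N$. We plan to use the subordination/superconvergence theory of Bercovici--Wang (free CLT superconvergence) or Wang's local limit results. The key input is that, by $E_{\mu_N}(w)=\int\rho_N(\rmd x)/(w-x)$, the measures $\rho_N$ converge to $\rho_{\rm sc}=\mu_{\rm sc}$ and have densities converging uniformly near $0$. Specifically, $\mu^{\boxplus N}$ is absolutely continuous with analytic density on the bulk for large $N$, and $F_{\mu_N}$ is given via the subordination function $\omega_N$ with $F_{\mu_N}$ extending continuously to $\R$ near $0$, uniformly in $N$. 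Continuity of $E_{\mu_N}$ up to the real axis near $0$, equicontinuous in $N$, then gives $E_{\mu_N}(zs_N)-E_{\mu_N}(0)\to0$ together with $E_{\mu_N}(0)\to -\rmi$.

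If superconvergence is awkward to invoke, a fallback is to work directly with the functional equation $F_{\mu^{\boxplus N}}(w)=N\omega(w)-(N-1)w$, $F_\mu(\omega)=\omega$-type subordination $\omega_N=F_\mu(\omega_N)+\ldots$. The plan would be to show that $\omega_N(\sqrt N w)/\sqrt N$ converges uniformly on $\{\Im w\ge0,|w|<\delta\}$. We expect this to be the heart of the paper.
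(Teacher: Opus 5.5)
Your proposal is correct in outline. For part (1), however, it coincides with the paper's \emph{alternative} proof rather than its main one.

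Parts (2) and (3) match the paper. In (3), your bound $|E_N(z)|\le MN^{\alpha+1/2}/(s_N\Im z)=MN^{2\alpha+1}/\Im z$ is the transform-side form of the paper's Chebyshev argument. Indeed, $MN^{2\alpha+1}$ is exactly the variance of $D_{N^\alpha}((\mu^{\boxplus N})^{\uplus M})$, i.e.\ the total mass of the Nevanlinna measure of $E_N$.

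In (1), your reduction to $F_{\mu_N}(zs_N)\to\mathrm{i}$ is the paper's. Importing uniform convergence of $F_{\mu_N}$ to $F_{\mu_{\rm sc}}$ on a neighbourhood of $0$ in $\C^+\cup\R$ from Wang's local limit theorem is precisely what the alternative proof does. If you take this route, cite that statement about $F_{\mu_N}$ itself. Uniform convergence of the densities of your measures $\rho_N$ near $0$ would not by itself give equicontinuity of $E_{\mu_N}$ up to the axis. The reason is that $\Re E_{\mu_N}$ is a conjugate Poisson integral, and at heights $s_N\to0$ it is governed by the modulus of continuity of the densities, not their sup norm. Your subordination fallback is also miswritten: the correct relations are $F_{\mu^{\boxplus N}}=F_\mu\circ\omega_N$ and $z=N\omega_N(z)-(N-1)F_\mu(\omega_N(z))$.

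The paper's main proof removes the obstacle you flagged by working with the inverse of $F_{\mu_N}$. Values of $F_{\mu_N}$ near $0$ should lie near $\mathrm{i}$. A point $\zeta$ near $\mathrm{i}$ corresponds, before rescaling, to $\sqrt N\zeta$, which runs to infinity inside a cone, exactly where Lemma~\ref{lem:asymptotics} controls $\varphi_\mu$. Hence the right inverse $H_N(\zeta)=\zeta+\sqrt N\varphi_\mu(\sqrt N\zeta)$ of $F_{\mu_N}$ converges uniformly on a small closed disc about $\mathrm{i}$ to $H(\zeta)=\zeta+1/\zeta$, which has a simple zero at $\mathrm{i}$.

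Rouch\'e's theorem then applies for large $N$ and every $w\in\C^+$ with $|w|$ below a threshold depending only on the disc radius $r$. It gives a unique root $\zeta$ of $H_N(\zeta)=w$ with $|\zeta-\mathrm{i}|<r$, and then $F_{\mu_N}(w)=\zeta$. So $F_{\mu_N}(w)\to\mathrm{i}$ jointly as $N\to\infty$ and $w\to0$ in $\C^+$. This needs no cone, places no restriction on the rate of $s_N$, and uses only mean zero and variance one.

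Your route is shorter on the page but rests on superconvergence-type local limit theory. The paper's main route is self-contained and elementary.
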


\begin{rem}
In Theorem~\ref{thm1}, the parameter $N$ does not need to be an integer, since we use the free convolution power $\mu^{\boxplus s}$ for $s\ge1$.
The Boolean exponent $M(N)$ is also not required to be an integer; it may be
any positive real number.
\end{rem}

We often interpret the convolution power parameter as a time parameter from the viewpoint of stochastic processes.  From this perspective, the regime $\alpha > -\frac{1}{2}$ corresponds to a small-Boolean-time limit, since the Boolean exponent $M(N)$ tends to zero. Similar small-time behavior of positive and unitary multiplicative free, as well as positive multiplicative Boolean L{\'e}vy processes has also been studied by Arizmendi and Hasebe \cite{arizmendi2018limit}.

As a further consequence of Theorem~\ref{thm1}, we obtain the following scaling limit for the Belinschi--Nica semigroup.

\begin{cor}\label{cor:lim B-N semigroup}
For any probability measures $\mu$ on $\R$ with mean $0$ and variance $1$, we have $D_{\sqrt{1+t}}\left(\bbB_t(\mu)\right) \xrightarrow{w} \mu_{C_1}$ as $t\to \infty$.
\end{cor}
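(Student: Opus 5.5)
The corollary is a direct specialization of Theorem~\ref{thm1}. The plan is to embed the Belinschi--Nica semigroup into the double array, identify the exponent $\alpha$ and the limit $t$ (called $\tau$ below, since $t$ is already the semigroup time), and then convert the normalization $D_{\sqrt{1+t}}$ into the form $D_{N^\alpha}$.

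\emph{Embedding.} Use the identity $\bbB_t(\mu)=(\mu^{\boxplus(1+t)})^{\uplus\frac{1}{1+t}}$ recalled in the introduction, and put
\[
N:=1+t\ge 1,\qquad M(N):=\frac{1}{1+t}=N^{-1}.
\]
Then $N\to\infty$ through positive real values as $t\to\infty$. By the remark after Theorem~\ref{thm1}, neither $N$ nor $M$ needs to be an integer, so the theorem applies to this real-parameter family.

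\emph{Choosing $\alpha$.} We need $MN^{\alpha+\frac12}=N^{\alpha-\frac12}$ to converge to a positive constant. This forces $\alpha=\tfrac12$, and then
\[
MN^{\alpha+\frac12}=N^{-1}\cdot N=1 .
\]
So the scaling hypothesis of Theorem~\ref{thm1} holds with limit $\tau=1$. Since $\alpha=\tfrac12>-\tfrac12$, case (1) of Theorem~\ref{thm1} gives
\[
D_{N^{1/2}}\bigl((\mu^{\boxplus N})^{\uplus M}\bigr)\xrightarrow{w}\mu_{C_1}.
\]

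\emph{Rewriting.} Finally, $D_{N^{1/2}}=D_{\sqrt{1+t}}$ and $(\mu^{\boxplus N})^{\uplus M}=\bbB_t(\mu)$. Substituting both gives $D_{\sqrt{1+t}}(\bbB_t(\mu))\xrightarrow{w}\mu_{C_1}$ as $t\to\infty$.

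There is no genuine obstacle here. The only point to check is that Theorem~\ref{thm1} is stated for $N\to\infty$ through real values with $M$ an arbitrary positive function of $N$, which covers a continuous-time family indexed by $t$. If one insists on sequences, this follows by applying the theorem along an arbitrary sequence $t_k\to\infty$.
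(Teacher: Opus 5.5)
Your proposal is correct and is essentially identical to the paper's proof: set $N=1+t$, $M=1/(1+t)$, observe $MN^{\alpha+\frac12}=1$ with $\alpha=\tfrac12>-\tfrac12$, and apply case (1) of Theorem~\ref{thm1}. Renaming the limiting parameter to $\tau$ so it does not clash with the semigroup time $t$ is a small clarity improvement over the paper's wording.
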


The structure of the paper is as follows. In Section~\ref{sec2}, we recall the analytic tools used in the paper, including the F-transform and its relation to additive Boolean convolution. In Section~\ref{sec3}, we prove Theorem~\ref{thm1}, namely the limit theorem for additive free--Boolean convolution powers, and classify the limiting distributions according to the scaling exponent $\alpha$. 

\section{Preliminaries}
\label{sec2}

\subsection*{Notation} 
Throughout this paper, we denote by $\mathcal P$ the set of all Borel probability measures on $\mathbb R$, and by $\mathcal P_0$ the set of all measures in $\mathcal P$ with mean $0$ and variance $1$. 
For $\mu\in\mathcal P$, $c\neq 0$, and a Borel set $B\subset\mathbb R$, the dilation $D_c(\mu)$ is defined by
\begin{align}\label{eq:dilation}
D_c(\mu)(B):=\mu(c^{-1}B).
\end{align}
For $\mu\in\mathcal P$, we write
$$
m_n(\mu):=\int_{\mathbb R}x^n\,\mu(dx),
\qquad n\in\mathbb N,
$$
for the $n$-th moment of $\mu$.

We denote by $\mathbb C^+$ the complex upper half-plane.
For $\alpha>0$ and $b\in\mathbb R$, we define
$$
\nabla_\alpha(b):=
\{z\in\mathbb C^+: |\Re(z-b)|<\alpha\,\Im(z-b)\}.
$$
In particular, we write $\nabla_\alpha:=\nabla_\alpha(0)$. We write $z\xrightarrow{\sphericalangle}\infty$ if there exists $\alpha>0$ such that $z\to\infty$ and $z\in\nabla_\alpha$. For $\lambda,R>0$, we define
$$
\Gamma_{\lambda,R}:=
\{z\in\nabla_\lambda:\Im(z)>R\}.
$$

\subsection{Cauchy transform and F-transform}

For any $\mu\in \calP$, we define the {\it Cauchy transform} $G_\mu$ of $\mu$ by
\[
    G_{\mu}(z) = \int_{\mathbb{R}} \frac{1}{z-t}\,\mu(\rmd t), 
    \qquad z \in \C^+.
\]
It is easily seen that $G_\mu$ is an analytic map from $\C^+$ into $\C^{-}$. Moreover, we define the {\it F-transform} of $\mu$ by
$$
F_{\mu}(z) := \frac{1}{G_{\mu}(z)}, \qquad z \in \mathbb{C}^{+}.
$$
This function is analytic on $\mathbb{C}^+$ and maps $\mathbb{C}^+$ into itself. 
Thus, Pick--Nevanlinna theorem can be applied to analyze its properties. 
We state the theorem in the following form.

\begin{prop}[Proposition 5.2 in \cite{BV93}]\label{prop:Pick}
    For any $\mu\in\calP$, there exist unique $b\in \R$ and a finite measure $\tau$ on $\R$ such that 
    \begin{align}\label{PN}
        F_{\mu}(z) 
        =  b+ z - \int_\R \frac{1+xz}{z-x}\,\tau(\mathrm{d} x).
    \end{align}
    Conversely, if an analytic function $F:\C^+\to\C^+$ admits a representation of the form \eqref{PN}, then there exists a unique $\mu \in \calP$ such that $F=F_{\mu}$ on $\C^+$.
\end{prop}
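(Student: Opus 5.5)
The plan is to deduce both directions from the classical Nevanlinna (Herglotz) representation of analytic self-maps of $\C^+$, combined with the asymptotic normalization $zG_\mu(z)\to 1$.

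\emph{Direct part.} The starting point is the general Nevanlinna theorem: every analytic $F:\C^+\to\C^+\cup\R$ can be written as
\[
F(z)=a+\beta z+\int_\R\frac{1+xz}{x-z}\,\tau(\rmd x),\qquad a\in\R,\ \beta\ge 0,
\]
with $\tau$ a finite positive measure, and the triple $(a,\beta,\tau)$ is unique. I would obtain this by conjugating with the Cayley transform to a holomorphic map of the disc with nonnegative imaginary part, and applying the Herglotz--Riesz representation to the positive harmonic function $\Im$. Note that $\frac{1+xz}{x-z}=-\frac{1+xz}{z-x}$, so this is already the form \eqref{PN} once we know $\beta=1$. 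Uniqueness of $\tau$ follows from Stieltjes inversion applied to $\Im F(z)$, since $\Im\frac{1+xz}{x-z}=(1+x^2)\frac{\Im z}{|x-z|^2}$. After that, $\beta$ and $a$ are determined by $F$.

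\emph{Identifying $\beta=1$.} For $\mu\in\calP$, dominated convergence gives $iyG_\mu(iy)=\int\frac{iy}{iy-t}\,\mu(\rmd t)\to 1$ as $y\to\infty$, and hence $F_\mu(iy)/(iy)\to 1$. On the other hand, dominated convergence in the representation shows
\[
\frac{1}{iy}\int_\R\frac{1+x\,iy}{x-iy}\,\tau(\rmd x)\to 0 ,
\]
because the integrand divided by $iy$ is bounded by a constant times $1$ uniformly in $y\ge1$, and it tends to $0$ pointwise. Therefore $F_\mu(iy)/(iy)\to\beta$, which forces $\beta=1$. Setting $b=a$ gives \eqref{PN}.

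\emph{Converse.} Suppose $F$ has the form \eqref{PN}. Then $\Im F(z)\ge \Im z>0$, so $G:=1/F$ is analytic from $\C^+$ to $\C^-$. The same dominated convergence argument gives $F(iy)/(iy)\to1$, so $iyG(iy)\to 1$. I would then invoke the standard characterization (Akhiezer / Maassen): an analytic $G:\C^+\to\C^-$ with $iyG(iy)\to1$ is the Cauchy transform of a unique probability measure. Concretely, $-G$ has its own Nevanlinna representation. Its linear coefficient must vanish because $G(iy)\to0$, and the normalization $iyG(iy)\to1$ pins down the representing measure as a probability measure $\mu$ with $G=G_\mu$. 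Uniqueness of $\mu$ again follows from Stieltjes inversion.

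The main obstacle is this converse characterization. I expect the delicate point to be showing that the representing measure of $-G$ has total mass exactly $1$ and produces no stray constant term. I would handle it by monotone convergence applied to $y\,\Im(-G(iy))=\int\frac{y^2}{y^2+t^2}\,\rho(\rmd t)$, where $\rho$ is the representing measure of $-G$. Everything else is routine application of the Herglotz theorem and dominated convergence.
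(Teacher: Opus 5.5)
Your argument is correct. The paper gives no proof of this proposition; it simply cites Bercovici--Voiculescu, and your route is the standard argument behind that reference: the Nevanlinna representation, $\beta=1$ forced by $F_\mu(\rmi y)/(\rmi y)\to1$, and, for the converse, the characterization of Cauchy transforms by $\rmi y\,G(\rmi y)\to1$.

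The one point to state precisely is the measure in your monotone-convergence step. Your $\rho$ must be $(1+t^2)\sigma(\rmd t)$, where $\sigma$ is the finite Nevanlinna measure of $-G$; only then does $y\,\Im(-G(\rmi y))=\int\frac{y^2}{y^2+t^2}\rho(\rmd t)$ hold as you wrote it. After that, the identity $\frac{1+tz}{(t-z)(1+t^2)}=\frac{1}{t-z}-\frac{t}{1+t^2}$ together with $G(\rmi y)\to0$ removes the constant term and gives $G=G_\rho$.
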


The following important examples of F-transforms will be used throughout the paper.
\begin{ex}\label{ex:distributions}
\begin{enumerate}[\rm (1)]
\item We define the {\it Wigner semicircle law} with mean $m\in \R$ and variance $v>0$ by
$$
\mu_{{\rm sc}_{m,v}}( \rmd x) := \frac{1}{2\pi v}\sqrt{4v- (x-m)^2} \cdot \mathbf{1}_{[m-2\sqrt{v},m+2\sqrt{v}]}(x) \rmd x.
$$
In particular, we denote by $\mu_{\rm sc}:=\mu_{{\rm sc}_{0,1}}$ the standard Wigner semicircle law. 
It appears as the limiting distribution in the free analogue of the central limit theorem.
Its F-transform is given by
\begin{align*}
    F_{\mu_{{\rm sc}_{m,v}}}(z) = \frac{z-m+\sqrt{(z-m)^2 - 4v}}{2}, \qquad z \in \C^+.
\end{align*}
Here the branch of the square root is chosen so that $\sqrt{z^2-4v}\sim z$ as $z$ tends to infinity non-tangentially.
\item We define the {\it Cauchy distribution} with a parameter $t>0$ by
$$
\mu_{C_t} (\rmd x):= \frac{t}{\pi(x^2 + t^2)}\rmd x.
$$
A straightforward computation using the residue theorem gives
\begin{align*}
    F_{\mu_{C_t}}(z) = z +\rmi t, \qquad z\in \C^+.
\end{align*}
\end{enumerate}
\end{ex}

We record the following relation between the convergence of F-transforms and the weak convergence of probability measures.

\begin{prop}\label{prop:convergence}
Let $\{\mu_n\}_{n\ge1}\subset \calP$ and $\mu\in \calP$. The following properties are equivalent:
\begin{enumerate}[\rm (1)]
\item $\mu_n\xrightarrow{w} \mu$;
\item $F_{\mu_n}(z) \to F_\mu(z)$ for any $z\in \C^+$.
\end{enumerate}
\end{prop}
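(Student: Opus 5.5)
The plan is to reduce everything to the Cauchy transforms $G_{\mu_n}$ and $G_\mu$. Since $\Im G_\nu(z)<0$ for every $\nu\in\calP$ and $z\in\C^+$, the map $w\mapsto 1/w$ is a homeomorphism of $\C^-$ onto $\C^+$. Hence, for each fixed $z\in\C^+$,
\[
F_{\mu_n}(z)\to F_\mu(z)\iff G_{\mu_n}(z)\to G_\mu(z).
\]
So it suffices to prove that $\mu_n\xrightarrow{w}\mu$ if and only if $G_{\mu_n}\to G_\mu$ pointwise on $\C^+$.

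\emph{(1) $\Rightarrow$ (2).} Fix $z\in\C^+$. The function $t\mapsto 1/(z-t)$ is continuous and bounded on $\R$ by $1/\Im z$. Its real and imaginary parts are therefore bounded continuous functions, and weak convergence gives $G_{\mu_n}(z)\to G_\mu(z)$ directly.

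\emph{(2) $\Rightarrow$ (1).} This is the substantive direction, and the main obstacle is that no tightness is assumed a priori. I would use a subsequence argument.
\begin{enumerate}[\rm (a)]
\item Take any subsequence of $(\mu_n)$. By Helly's selection theorem it has a further subsequence $(\mu_{n_k})$ converging vaguely, that is, against $C_0(\R)$, to some positive measure $\nu$ with $\nu(\R)\le 1$.
\item For fixed $z\in\C^+$, the function $t\mapsto 1/(z-t)$ lies in $C_0(\R)$. Vague convergence therefore gives $G_{\mu_{n_k}}(z)\to G_\nu(z):=\int(z-t)^{-1}\,\nu(\rmd t)$.
\item Combining with (2) gives $G_\nu=G_\mu$ on $\C^+$. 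The Stieltjes inversion formula,
\[
-\tfrac1\pi\Im G(x+\rmi y)\,\rmd x\to \text{measure} \quad (y\downarrow0),
\]
recovers a finite positive measure from its Cauchy transform. Hence $\nu=\mu$.
\item In particular $\nu(\R)=1$, so no mass escapes to infinity. A vaguely convergent sequence of probability measures whose limit is a probability measure converges weakly: the total-mass condition yields tightness, and one approximates bounded continuous functions by compactly supported ones. Thus $\mu_{n_k}\xrightarrow{w}\mu$.
\end{enumerate}
Since every subsequence of $(\mu_n)$ has a further subsequence converging weakly to the same limit $\mu$, the full sequence converges weakly to $\mu$.

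As an alternative route to tightness, one could use $\Im\bigl(\rmi y\,G_{\mu_n}(\rmi y)\bigr)=-\int \frac{yt}{y^2+t^2}\,\mu_n(\rmd t)$ together with the real part to control the tails. The Helly and inversion argument above avoids this computation, so that is the route I would take.
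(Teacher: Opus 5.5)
Your proof is correct and complete. Passing to Cauchy transforms via the homeomorphism $w\mapsto 1/w$ of $\C^-$ onto $\C^+$, extracting a vaguely convergent subsequence by Helly, identifying the vague limit $\nu$ with $\mu$ by Stieltjes inversion (so $\nu(\R)=1$ and no mass escapes), and finishing with the subsequence principle is the standard argument.

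The paper gives no proof of this proposition. It records it as a known fact and uses it only to pass from $F_{\rho_N}(z)\to z+\rmi t$ to $\rho_N\xrightarrow{w}\mu_{C_t}$, so there is no competing route to compare.

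One feature of your argument is worth keeping in view: tightness comes for free because the limit is assumed a priori to be $F_\mu$ with $\mu\in\calP$. Suppose instead you only knew $F_{\mu_n}\to F$ pointwise for some function $F$. Since $|F_{\mu_n}(z)|\ge \Im z$, you still get $G_{\mu_n}\to 1/F$. The same Helly-plus-inversion argument then gives weak convergence to a probability measure exactly when $\rmi y/F(\rmi y)\to 1$ as $y\to\infty$. So even in that setting, the uniform tail estimate you sketch as an alternative is unnecessary.
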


\subsection{Voiculescu transform and additive free convolution}

According to \cite{BV93}, there exist $\lambda,R>0$ such that a right inverse of $F_\mu$, denoted by $F_\mu^{\langle -1\rangle}$, is well defined on $\Gamma_{\lambda,R}$. We define the {\it Voiculescu transform of $\mu$} by
$$
\varphi_\mu(z) := F_\mu^{\langle -1 \rangle} (z)-z, \qquad z\in \Gamma_{\lambda, R}.
$$
It is known that this transform linearizes additive free convolution, which describes the distribution of the sum of freely independent self-adjoint random variables.
\begin{prop}[Corollary 5.8 in \cite{BV93}]\label{prop:Voiculescu_free}
Let $\mu,\nu \in \calP$. Then there exist $\lambda,R>0$ such that
$$
\varphi_{\mu\boxplus \nu} (z) = \varphi_\mu(z) + \varphi_\nu(z), \qquad z\in \Gamma_{\lambda,R}.
$$
\end{prop}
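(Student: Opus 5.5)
We plan to prove the additivity first for compactly supported measures, where it follows from the algebraic theory of free cumulants, and then to pass to arbitrary $\mu,\nu\in\calP$ by approximation. The analytic heart of the argument is that the inverses $F_{\mu}^{\langle -1\rangle}$ live on a common domain $\Gamma_{\lambda,R}$, uniformly over tight families of measures.

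\textbf{Step 1 (compact support).} Suppose $\mu,\nu$ have compact support. Realize them as the distributions of bounded self-adjoint operators $a,b$ that are free in a $W^*$-probability space. Near $w=0$, define the $R$-transform
$$
R_\mu(w)=G_\mu^{\langle -1\rangle}(w)-\frac1w .
$$
It is the generating series $\sum_{n\ge1}\kappa_n(\mu)w^{n-1}$ of the free cumulants. Freeness of $a$ and $b$ means that mixed free cumulants vanish, so $\kappa_n(\mu\boxplus\nu)=\kappa_n(\mu)+\kappa_n(\nu)$. Hence $R_{\mu\boxplus\nu}=R_\mu+R_\nu$ as convergent power series near $0$.

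Since $F_\mu=1/G_\mu$, we have $F_\mu^{\langle-1\rangle}(z)=G_\mu^{\langle-1\rangle}(1/z)$. It follows that $\varphi_\mu(z)=R_\mu(1/z)$ for $|z|$ large. This gives the identity on $\{|z|>R\}\cap\C^+$, which contains some $\Gamma_{\lambda,R}$.

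\textbf{Step 2 (uniform domains).} This is the step we expect to be the main obstacle. Let $\mathcal K\subset\calP$ be tight. We want to show that for each $\lambda>0$ there is an $R>0$ such that, for every $\mu\in\mathcal K$:
\begin{itemize}
\item $F_\mu$ is univalent on a slightly larger truncated cone, and its image contains $\Gamma_{\lambda,R}$;
\item $\varphi_\mu(z)=o(|z|)$ uniformly on $\Gamma_{\lambda,R}$.
\end{itemize}

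We would first use tightness to get $zG_\mu(z)\to1$ uniformly in $\mu\in\mathcal K$ as $z\xrightarrow{\sphericalangle}\infty$, splitting the integral into $|t|\le T$ and $|t|>T$. This gives $F_\mu(z)=z(1+o(1))$ uniformly on cones, and similarly $F_\mu'(z)\to1$ by Cauchy estimates. A Rouché-type argument then shows that $F_\mu(w)-z$ has exactly one zero in a small disc around $z$. This defines $F_\mu^{\langle-1\rangle}$ on $\Gamma_{\lambda,R}$ with uniform control. The same estimates give the continuity statement: if $\mu_n\xrightarrow{w}\mu$, then $\varphi_{\mu_n}\to\varphi_\mu$ uniformly on compact subsets of $\Gamma_{\lambda,R}$. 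This uses Proposition~\ref{prop:convergence} together with Vitali/Montel.

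\textbf{Step 3 (approximation).} For general $\mu,\nu$, we truncate to obtain compactly supported $\mu_n\xrightarrow{w}\mu$ and $\nu_n\xrightarrow{w}\nu$. For example, take $\mu_n$ to be the distribution of $f_n(a)$ with $f_n(x)=\max(-n,\min(x,n))$, where $a,b$ are free self-adjoint operators affiliated with a finite von Neumann algebra. Then $f_n(a)\to a$ and $f_n(b)\to b$ in the strong resolvent sense, so $f_n(a)+f_n(b)\to a+b$ in distribution. That is, $\mu_n\boxplus\nu_n\xrightarrow{w}\mu\boxplus\nu$.

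The families $\{\mu_n\}$, $\{\nu_n\}$ and $\{\mu_n\boxplus\nu_n\}$ are tight. Step 2 therefore provides a single domain $\Gamma_{\lambda,R}$ on which all the Voiculescu transforms are defined and converge. Applying Step 1 to each $n$ and letting $n\to\infty$ yields $\varphi_{\mu\boxplus\nu}=\varphi_\mu+\varphi_\nu$ on $\Gamma_{\lambda,R}$.
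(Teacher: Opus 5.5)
Your proposal is correct and follows essentially the route of the cited source. The paper gives no proof of its own, and Bercovici--Voiculescu obtain Corollary~5.8 the same way: they pass from Voiculescu's compactly supported case to general measures by approximation, using uniform domains and continuity of the Voiculescu transform on tight families together with continuity of $\boxplus$ under weak convergence.

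Two details should be tightened. First, strong resolvent convergence of the summands does not by itself give $f_n(a)+f_n(b)\to a+b$ in distribution; what does is the finite trace $\tau$. The two operators agree on the range of $p_n=E_a([-n,n])\wedge E_b([-n,n])$, and a min--max argument using $\tau(e\wedge f)\ge\tau(e)+\tau(f)-1$ bounds the Kolmogorov distance of their laws by $\tau(1-p_n)\le\mu(\R\setminus[-n,n])+\nu(\R\setminus[-n,n])$. Second, Step~1 gives the identity only beyond an $n$-dependent radius, so before letting $n\to\infty$ you must extend it to the common $\Gamma_{\lambda,R}$ by the identity theorem, after checking that the power-series inverse and the inverse from Step~2 coincide far out in the cone.
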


For $\mu\in \calP$, we define the $n$-fold additive free convolution of $\mu$ by
\[
\mu^{\boxplus n}
:= \underbrace{\mu\boxplus\cdots\boxplus\mu}_{n\ \mathrm{times}},
\qquad n\in \N.
\]
By Proposition~\ref{prop:Voiculescu_free}, we have $\varphi_{\mu^{\boxplus n}}=n\varphi_\mu$ on some domain $\Gamma_{\lambda,R}$. More generally, for any $\mu\in\calP$ and $t\ge 1$, there exists $\mu_t\in\calP$ such that $\varphi_{\mu_t}=t\varphi_\mu$ on some domain $\Gamma_{\lambda,R}$. In this case, we write
$\mu^{\boxplus t}:=\mu_t$
and call it the {\it additive free convolution power} of $\mu$.

For any $\mu\in \calP$, the right inverse $F_\mu^{\langle -1\rangle}$ admits the asymptotic expansion
\[
F_\mu^{\langle -1\rangle}(z)=z(1+o(1)),
\qquad z\xrightarrow{\sphericalangle}\infty.
\]
Consequently,
\[
\varphi_\mu(z)=o(z),
\qquad z\xrightarrow{\sphericalangle}\infty.
\]
In particular, for any $\mu\in \calP_0$, its Voiculescu transform admits the following refined asymptotic expansion:

\begin{lem}\label{lem:asymptotics}
For any $\mu\in \calP_0$, we have
$$
\varphi_\mu(z) = \frac{1}{z}  + o\left(\frac{1}{z}\right), \qquad z\xrightarrow{\sphericalangle} \infty.
$$
\end{lem}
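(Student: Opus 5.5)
We want $\varphi_\mu(z) = 1/z + o(1/z)$ as $z\to\infty$ non-tangentially, for $\mu$ with mean zero and variance one. The plan is to get the asymptotics of $F_\mu$ from its Nevanlinna representation, then invert.

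First we determine $b$ and $\tau$ in Proposition~\ref{prop:Pick}. Since $\mu$ has finite second moment, $G_\mu(z) = \frac1z + \frac{m_1}{z^2} + \frac{m_2}{z^3} + o(z^{-3})$ as $z\xrightarrow{\sphericalangle}\infty$. This follows by writing
\[
\frac{1}{z-t} = \frac1z + \frac{t}{z^2} + \frac{t^2}{z^2(z-t)}
\]
and applying dominated convergence to $\int \frac{z\,t^2}{z-t}\,\mu(\rmd t) \to m_2$. The dominated convergence step uses $|z/(z-t)|\le \sqrt{1+\alpha^2}$ on $\nabla_\alpha$. With $m_1=0$ and $m_2=1$, inverting gives
\[
F_\mu(z) = z - \frac1z + o\!\left(\frac1z\right).
\]
Equivalently, in the representation one finds $b=0$ and $\tau(\R)=1$ (that is, $\tau=\rho$ with $\int(1+x^2)\,\rho = 1$ after reparametrization). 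This is the standard correspondence, and only the asymptotic form above is needed.

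Next, set $w = F_\mu^{\langle -1\rangle}(z)$ for $z\in\Gamma_{\lambda,R}$. The stated fact $F_\mu^{\langle -1\rangle}(z) = z(1+o(1))$ gives $w\sim z$. In particular, for large $|z|$ the point $w$ also tends to $\infty$ inside a slightly larger cone $\nabla_{\lambda'}$, since $w/z\to1$ keeps the argument close to that of $z$. Hence the expansion of $F_\mu$ applies at $w$:
\[
z = F_\mu(w) = w - \frac1w + o\!\left(\frac1w\right).
\]
Therefore
\[
\varphi_\mu(z) = w - z = \frac1w + o\!\left(\frac1w\right) = \frac1z + o\!\left(\frac1z\right),
\]
where the last step uses $1/w - 1/z = (z-w)/(zw) = O(1/|z|^3)$.

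The main obstacle is the justification that $w$ stays in a cone where the $o(1/w)$ estimate for $F_\mu$ is uniform. This is handled by the $w/z\to1$ argument: given $\lambda$, choose $\lambda'>\lambda$, and note that for large $|z|$ we have $w\in\nabla_{\lambda'}$. The other point needing care is the uniform dominated-convergence bound on the cone for the Cauchy transform expansion, which is the elementary estimate $|z|/|z-t|\le\sqrt{1+\alpha^2}$ for $z\in\nabla_\alpha$, $t\in\R$.
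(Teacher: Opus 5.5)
Your proposal is correct and follows essentially the paper's route: expand $G_\mu$ to get $F_\mu(z)=z-\frac1z+o(\frac1z)$ and then invert. Your dominated-convergence bound on $\nabla_\alpha$ and your check that $w=F_\mu^{\langle -1\rangle}(z)$ stays in a slightly larger cone supply details that the paper's one-line inversion step leaves implicit. One aside is false but unused: in general only $\int(1+x^2)\,\tau(\rmd x)=1$ and $b=\int x\,\tau(\rmd x)$ hold, so $b=0$ together with $\tau(\R)=1$ holds only for $\mu=\frac12(\delta_{-1}+\delta_1)$, and you should drop that remark.
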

\begin{proof}
This result is a special case of \cite[Theorem~1.3]{benaych2004taylor}; however, we include a proof for the reader's convenience. Since $m_1(\mu)=0$ and $m_2(\mu)=1$, we have
$$
G_\mu(z)= \frac{1}{z} + \frac{1}{z^3} + o\left(\frac{1}{z^3}\right),\qquad z\xrightarrow{\sphericalangle} \infty.
$$
Consequently,
$$
F_\mu(z) = z - \frac{1}{z} + o\left(\frac{1}{z}\right).
$$
Since $F_\mu$ has a right inverse in a neighborhood of infinity, it follows that
$$
F_\mu^{\langle-1\rangle} (z) = z + \frac{1}{z} + o\left(\frac{1}{z}\right), \qquad z\xrightarrow{\sphericalangle} \infty.
$$
Thus, the desired asymptotic expansion follows.
\end{proof}

\subsection{Boolean convolution}

The {\it additive Boolean convolution} $\mu\uplus\nu$ was introduced by Speicher and Woroudi \cite{Speicher} as the distribution of the sum of Boolean independent self-adjoint random variables with distributions $\mu,\nu\in\calP$. 
It can be characterized in terms of F-transforms.

\begin{prop}[Section 3 in \cite{Speicher}]\label{prop:Boolean}
For any $\mu,\nu\in \calP$, we have
$$
F_{\mu\uplus \nu}(z)=F_\mu(z) + F_\nu(z)-z, \qquad z\in \C^+.
$$
\end{prop}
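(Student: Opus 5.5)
The statement "immediately above" is Proposition~\ref{prop:Boolean}: the $F$-transform of $\mu\uplus\nu$ is $F_\mu+F_\nu-z$. I will prove it in the form it takes in \cite{Speicher}. There, $\mu\uplus\nu$ is the distribution of $X+Y$ with $X,Y$ Boolean independent self-adjoint variables in a (non-unital) algebra with state $\varphi$. The plan is to encode everything in the self-energy (Boolean transform) $E_\mu(z):=z-F_\mu(z)$. Once we know that $E$ is additive under Boolean independence, the proposition is just $F_{\mu\uplus\nu}=z-E_\mu-E_\nu=F_\mu+F_\nu-z$.

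\textbf{Step 1: compactly supported case, formal power series.} First assume $\mu,\nu$ are compactly supported. Write $G_\mu(z)=\sum_{n\ge0}m_n(\mu)z^{-n-1}$ for large $|z|$. Define the Boolean cumulants $b_n(\mu)$ by
\[
\frac{1}{z\,G_\mu(z)}=1-\sum_{n\ge1}b_n(\mu)z^{-n}.
\]
Then $E_\mu(z)=\sum_{n\ge1}b_n(\mu)z^{1-n}$. Equivalently, $m_n=\sum b_{k_1}\cdots b_{k_r}$, summed over compositions $(k_1,\dots,k_r)$ of $n$, i.e.\ over interval partitions.

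Next, expand the mixed moment $\varphi((X+Y)^n)$ into words in $X$ and $Y$. Group each word into maximal alternating blocks of equal letters. Boolean independence means $\varphi(X^{a_1}Y^{a_2}X^{a_3}\cdots)=\varphi(X^{a_1})\varphi(Y^{a_2})\varphi(X^{a_3})\cdots$. Substituting the moment-cumulant formula into each factor turns every term into a sum over interval partitions whose blocks are coloured $X$ or $Y$. Here the point to check is that any interval partition with arbitrary block colours arises exactly once. Indeed, consecutive blocks of the same colour merge into one letter-run, and they are recovered by refining that run with the moment-cumulant formula for $\varphi(X^{a})$.

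Hence $m_n(X+Y)=\sum\prod(b_{k}(X)+b_k(Y))$ over compositions, i.e.\ $b_n(\mu\uplus\nu)=b_n(\mu)+b_n(\nu)$. This gives $E_{\mu\uplus\nu}=E_\mu+E_\nu$ as convergent series near infinity. By analytic continuation, $F_{\mu\uplus\nu}=F_\mu+F_\nu-z$ on all of $\C^+$.

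\textbf{Step 2: general measures, by approximation.} For arbitrary $\mu,\nu\in\calP$, take compactly supported truncations $\mu_k\to\mu$ and $\nu_k\to\nu$ weakly. By Proposition~\ref{prop:convergence}, $F_{\mu_k}\to F_\mu$ and $F_{\nu_k}\to F_\nu$ pointwise. So $F_{\mu_k}+F_{\nu_k}-z$ converges pointwise to $H:=F_\mu+F_\nu-z$.

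Using Proposition~\ref{prop:Pick}, $H$ has the form $b+z-\int\frac{1+xz}{z-x}\,\tau(\rmd x)$ with $\tau=\tau_\mu+\tau_\nu$ and $b=b_\mu+b_\nu$. Therefore $H$ is the $F$-transform of a probability measure $\rho$. The measures $\mu_k\uplus\nu_k$ then have $F$-transforms converging pointwise to $F_\rho$, so by Proposition~\ref{prop:convergence} they converge weakly to $\rho$.

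\textbf{Main obstacle.} The combinatorial bijection in Step 1 is routine. The delicate point is Step 2: identifying $\rho$ with $\mu\uplus\nu$. This requires weak continuity of the operation $\uplus$ as defined operator-theoretically. Its natural model is the Boolean product of Hilbert spaces with the operator sum $X+Y$, and continuity there follows from strong resolvent convergence of the truncated operators.

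Alternatively, and more cleanly, I would take Step 2 itself as the definition of $\uplus$ on unbounded measures. Under that convention the argument reduces to checking that $H$ maps $\C^+$ to $\C^+$ and satisfies the normalisation $H(iy)/(iy)\to1$, both of which are immediate from \eqref{PN}.
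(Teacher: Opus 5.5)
The paper gives no proof of this statement. It imports the identity from Section~3 of Speicher--Woroudi and then treats it as the working definition of $\uplus$: the powers $\mu^{\uplus t}$ are introduced right afterwards exactly through $F_{\mu_t}=tF_\mu+(1-t)z$ and Proposition~\ref{prop:Pick}.

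Your proposal is a correct, self-contained reconstruction. Step~1 is the standard combinatorial argument. The self-energy $E_\mu=z-F_\mu$ is the generating function of the Boolean cumulants. Pairs (word in $X,Y$, interval partition of each maximal run) correspond bijectively to coloured interval partitions, so $b_n$ is additive. The identity theorem then carries the equality from a neighbourhood of infinity to all of $\C^+$.

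For general $\mu,\nu$, your option (b) is the right one and matches how the paper uses the statement. The only thing to check is that $H=F_\mu+F_\nu-z$ is an $F$-transform. This is immediate: \eqref{PN} holds for $H$ with data $(b_\mu+b_\nu,\tau_\mu+\tau_\nu)$, and $\Im H(z)\ge \Im z>0$ because $\Im F_\mu(z)\ge\Im z$. Your approximation argument then adds something the citation leaves implicit. It shows that the analytically defined $\uplus$ is the unique weakly continuous extension of the operator-defined one from compactly supported measures.

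Option (a), by contrast, is only asserted. For measures without compact support you would first need to show that $\tilde X+\tilde Y$ is (essentially) self-adjoint on the Boolean product. This is not automatic when the cyclic vector lies outside the operator domains, e.g.\ for Cauchy-type laws. You would then need to verify strong resolvent convergence on a common core. So if you insist on the operator-theoretic meaning of $\uplus$ for unbounded variables, that is where the remaining work lies. Under the analytic convention your proof is complete.
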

By Propositions~\ref{prop:Pick} and \ref{prop:Boolean}, for any $\mu\in\calP$ and $t>0$, there exists $\mu_t\in\calP$ such that
\begin{equation*}\label{bpower}
    F_{\mu_t}(z)=tF_\mu(z)+(1-t)z,
    \qquad z\in\C^+.
\end{equation*}
We denote the measure $\mu_t$ by $\mu^{\uplus t}$
and call it the {\it additive Boolean convolution power} of $\mu$.
For $\mu\in \calP$ whose mean and variance are finite, we have
$$
m_1(\mu^{\uplus t})= t\ m_1(\mu) \quad \text{and} \quad \text{Var}(\mu^{\uplus t}) = t\ \text{Var}(\mu), \quad t>0.
$$

Belinschi and Nica introduced in \cite{BelinschiNica2008} a semigroup of homomorphisms connecting free and Boolean convolutions, defined by
$$
\bbB_t(\mu):= (\mu^{\boxplus (1+t)})^{\uplus \frac{1}{1+t}}, \qquad t\ge0, \ \mu\in \calP.
$$
This semigroup provides a natural link between the free and Boolean convolution structures; in particular, at $t=1$, it coincides with the free--Boolean Bercovici--Pata bijection.


\section{Proof of Theorem \ref{thm1}}\label{sec3}
In this section, we prove the limit theorem for free--Boolean double arrays, namely Theorem \ref{thm1}. Before giving the proof, we make the following remark concerning the statement of Theorem \ref{thm1}

For $\mu\in \calP_0$, we set 
$$
\rho_N:= D_{N^\alpha}((\mu^{\boxplus N})^{\uplus M}) \quad \text{and} \quad \mu_N :=D_{\frac{1}{\sqrt{N}}}\left(\mu^{\boxplus N}\right). 
$$
We first prove parts (2) and (3) of Theorem~\ref{thm1} by an elementary probabilistic argument.
\begin{proof}
{\bf The case $\alpha=-1/2$.} In this case, we consider the regime in which $M=M(N)\to t$ while $N\to \infty$. Notice  that $\rho_N=\mu_N^{\uplus M}$. By the free central limit theorem (\cite[Theorem 5.1]{maassen1992addition}), we have $\mu_N\xrightarrow{w}\mu_{\rm sc}$. Since Boolean convolution powers are continuous with respect to weak convergence, it follows that
$$
\rho_N = \mu_N^{\uplus M} \to \mu_{\rm sc}^{\uplus t}.
$$

\hspace{-4mm}{\bf The case $\alpha<-1/2$.} In this case, we consider the regime in which $M=M(N)\to \infty$ while $MN^{\alpha+\frac{1}{2}}\to t$. Then
\begin{align*}
m_1(\rho_N) &= MN^{\alpha + 1} m_1(\mu) = 0\\
\text{Var}(\rho_N) &= MN^{2\alpha + 1} \text{Var}(\mu) = MN^{2\alpha+1} = M^{-1} (MN^{\alpha+\frac{1}{2}})^2 \to 0.
\end{align*}
By Chebychev's inequality, for any $\epsilon>0$, we have
\begin{align*}
\rho_N(\R\setminus[-\epsilon,\epsilon]) 
&\le \frac{\text{Var}(\rho_N)}{\epsilon^2}  \to 0
\end{align*}
as $N\to \infty$. Therefore $\rho_N\xrightarrow{w} \delta_0$.
\end{proof}

We now turn to the proof of part (1) of Theorem~\ref{thm1}. Unlike parts (2) and (3), this case requires a more delicate analysis of the F-transform.
\begin{proof} {\bf The case $\alpha>-1/2$.} We prove the convergence of the F-transform of $\rho_N$ as follows. For $z\in\C^+$, we define $z_N:= N^{-\alpha-\frac{1}{2}}z$ for each $N\ge 1$. Then we have
    \begin{align*}
        F_{\rho_N}(z)
        &=N^\alpha F_{\left(\mu^{\boxplus N}\right)^{\uplus M}}\left(N^{-\alpha} z\right)\\
        &=N^\alpha\left(M F_{\mu^{\boxplus N}}\left(N^{-\alpha} z\right)-(M-1) N^{-\alpha} z\right)\\
        &=N^\alpha M F_{\mu^{\boxplus N}}\left(N^{-\alpha} z\right)-(M-1) z\\
        &=M N^{\alpha+\frac{1}{2}} F_{\mu_N}(z_N)- M z  +z.
    \end{align*}
 
The Voiculescu transform $\varphi_\mu$ is defined on $\Gamma_{\lambda,R}$ for some
$\lambda,R>0$. Choose
\[
r\in \left(0,\frac{\lambda}{1+\lambda}\right).
\]
For any $\zeta\in D_r(\mathrm{i}):=\{\zeta:|\zeta-\mathrm{i}|\le r\}$, we have
\[
|\Re (\zeta)|\le r,
\qquad
\Im (\zeta)\ge 1-r,
\]
and hence
\[
|\Re(\sqrt{N}\zeta)|
=\sqrt{N}|\Re(\zeta)|
\le \sqrt{N}r
<\sqrt{N}\lambda(1-r)
\le \lambda \Im(\sqrt{N}\zeta).
\]
Moreover, if $N$ is sufficiently large so that $\sqrt{N}(1-r)>R$, then
\[
\Im(\sqrt{N}\zeta)
=\sqrt{N}\Im(\zeta)
\ge \sqrt{N}(1-r)>R.
\]
Therefore, $\sqrt{N}\zeta\in\Gamma_{\lambda,R}$ for all sufficiently large $N$. Thus, $\varphi_{\mu_N}$ is well defined on $D_r(\mathrm{i})$ and satisfies
\[
\varphi_{\mu_N}(\zeta)
=
\sqrt{N}\,\varphi_\mu(\sqrt{N}\zeta),
\qquad \zeta\in D_r(\mathrm{i}).
\]
By Lemma~\ref{lem:asymptotics}, we have
\[
w\varphi_\mu(w)\to 1,
\qquad
w\xrightarrow{\sphericalangle}\infty.
\]
Hence,
\[
\varphi_{\mu_N}(\zeta)
=
\frac{1}{\zeta}\,
\sqrt{N}\zeta\,\varphi_\mu(\sqrt{N}\zeta)
\to \frac{1}{\zeta},
\qquad N\to\infty,
\]
uniformly on $D_r(\mathrm{i})$. Define
\[
H_N(\zeta):=\zeta+\varphi_{\mu_N}(\zeta),
\qquad \zeta\in D_r(\mathrm{i}).
\]
Then
\begin{equation}\label{eq;unif}
H_N(\zeta)\to H(\zeta):=\zeta+\frac{1}{\zeta},
\qquad N\to\infty,
\end{equation}
uniformly on $D_r(\mathrm{i})$. Clearly, $H(\mathrm{i})=0$, and
$H(\zeta)\neq 0$ for all $\zeta\in\C^+\setminus\{\mathrm{i}\}$. Thus, for sufficiently small $r>0$, the function $H$ has a unique zero, namely $\zeta=\mathrm{i}$, in
$B_r(\mathrm{i}):=\{\zeta:|\zeta-\mathrm{i}|<r\}$.
In particular,
\[
m:=\inf_{|\zeta-\mathrm{i}|=r}|H(\zeta)|>0.
\]

By \eqref{eq;unif}, for all sufficiently large $N$,
\[
\sup_{|\zeta-\mathrm{i}|=r}|H_N(\zeta)-H(\zeta)|<\frac{m}{3}.
\]
Since $z_N=N^{-\alpha-\frac12}z\to 0$, we also have, for all sufficiently large $N$,
\[
|z_N|<\frac{m}{3}.
\]
Therefore, for all $z\in \C^+$ and $|\zeta-\mathrm{i}|=r$,
\[
|H_N(\zeta)-z_N-H(\zeta)|
\le |H_N(\zeta)-H(\zeta)|+|z_N|
<\frac{2m}{3}
<|H(\zeta)|.
\]
By Rouch\'e's theorem, the equation $H_N(\zeta)=z_N$
has exactly one solution in $B_r(\mathrm{i})$; denote it by $\zeta_N$. Since $r>0$ can be chosen arbitrarily small, it follows that $\zeta_N\to \mathrm{i}$ as $N\to\infty$.
Moreover, by the definition of the Voiculescu transform, we have
$H_N(\zeta)=F_{\mu_N}^{\langle -1\rangle}(\zeta)$ on $D_r(\mathrm{i})$.
Then we have
\[
F_{\mu_N}^{\langle -1\rangle}(\zeta_N)=H_N(\zeta_N)=z_N,
\]
and therefore
\begin{equation}\label{eq:F_N-i}
F_{\mu_N}(z_N)=\zeta_N\to \mathrm{i} \quad \text{ on }D_r(\mathrm{i}), \qquad N\to\infty.
\end{equation}

Finally, by \eqref{eq:F_N-i}, for every $z\in\C^+$,
\[
\begin{aligned}
|F_{\rho_N}(z)-F_{\mu_{C_t}}(z)|
&=
\left|
MN^{\alpha+\frac{1}{2}}F_{\mu_N}(N^{-\alpha-\frac{1}{2}}z)
-Mz-\mathrm{i}t
\right|  \\
&=
\left|
MN^{\alpha+\frac{1}{2}}
\left\{
F_{\mu_N}(N^{-\alpha-\frac{1}{2}}z)-\mathrm{i}
\right\}
+
\left(MN^{\alpha+\frac{1}{2}}-t\right)\mathrm{i}
-Mz
\right|  \\
&\le
MN^{\alpha+\frac{1}{2}}
\left|
F_{\mu_N}(N^{-\alpha-\frac{1}{2}}z)-\mathrm{i}
\right|
+
\left|MN^{\alpha+\frac{1}{2}}-t\right|
+
M|z|
\to 0
\end{aligned}
\]
as $N\to\infty$. 
Since $MN^{\alpha+\frac{1}{2}}\to t$ and $M\to0$ for $\alpha>-1/2$, we obtain that $\rho_N\xrightarrow{w}\mu_{C_t}$ as $N\to\infty$ by Proposition~\ref{prop:convergence}.
\end{proof}

Using the result of \cite{wang2010local}, we provide an alternative short proof of Theorem \ref{thm1} in the case $\alpha>-1/2$.
\begin{proof}[Alternative proof] {\bf The case $\alpha>-1/2$.}
According to \cite[Theorem 3.4 (ii)]{wang2010local}, the function $F_{\mu_N}$ converges uniformly to $F_{\mu_{\rm sc}}$ in a neighborhood of $z=0$. This implies that
\begin{align*}
F_{\rho_N}(z) &=MN^{\alpha+\frac{1}{2}}F_{\mu_N}(z_N) -Mz + z \\
&\to t F_{\mu_{\rm sc}}(0) + z = z+\mathrm{i}t = F_{\mu_{C_t}}(z),
\end{align*}
as desired.
\end{proof}

From this result, we can show Corollary \ref{cor:lim B-N semigroup} as follows.
\begin{proof}[Proof of Corollary \ref{cor:lim B-N semigroup}]
Put $N=1+t$ and $M=1/(1+t)$. Then
$M N^{1/2+1/2} = (1/N)N = 1.$
Applying Theorem~\ref{thm1} with $\alpha=1/2$ and limiting parameter equal to $1$ gives
$
D_{\sqrt{N}}((\mu^{\boxplus N})^{\uplus 1/N}) \to \mu_{C_1}.
$
Since $N=1+t$, this is exactly
$
D_{\sqrt{1+t}}(\bbB_t(\mu)) \to \mu_{C_1}.
$
This completes the proof.
\end{proof}


\subsection*{Acknowledgment}
The authors would like to thank the referee for their careful reading of the manuscript and valuable comments, which helped improve the paper.
Noriyoshi Sakuma was supported by JSPS KAKENHI Grant-in-Aid for Scientific Research (A), Grant No.~25H00593, Grant-in-Aid for Scientific Research (C), Grant No.~23K03133 and 26K06828, and JSPS Open Partnership Joint Research Projects, Grant No.~JPJSBP120209921.
Yuki Ueda was supported by JSPS KAKENHI Grant-in-Aid for Young Scientists, Grant No.~22K13925. He also acknowledges support from the National Science and Technology Council (NSTC), Taiwan, under Grant No.~114-2115-M-007-006-MY3, for his visit to National Tsing Hua University in Taiwan. Pei-Lun Tseng was supported by the National Science and Technology Council (NSTC), Taiwan, under Grant No.~114-2115-M-033-005-MY2.

\bibliographystyle{abbrv}
\bibliography{main.bib}

\end{document}